\theoremstyle{definition}
\newtheorem*{definition*}{Definition}
\newtheorem{definition}{Definition}
\newtheorem{theorem}{Theorem}
\newtheorem{lemma}[theorem]{Lemma}
\newtheorem{proposition}[theorem]{Proposition}
\newtheorem{corollary}[theorem]{Corollary}
\newtheorem{remark}{Remark}
\newtheorem*{remark*}{Remark}
\newtheorem*{theorem*}{Theorem}
\def\Z{\mathbb{Z}}
\begin{document}
\title[Simplicity of spectrum for certain MCF]{Simplicity of spectrum for certain multidimensional continued fraction algorithms}

\author[C.~Fougeron]{Charles Fougeron$^\star$}
\thanks{$\star$ Max--Plank institute for Mathematics, Vivatsgasse 7, 53111 Bonn, Germany}
\author[A.~Skripchenko]{Alexandra Skripchenko$^\dagger$}
\thanks{$\dagger$ Faculty of Mathematics, National Research University Higher School of Economics, Usacheva St. 6, 119048 Moscow, Russia \textit{and}
Skolkovo Institute for Science and Technology, Skolkovo Innovation Center, 143026 Moscow, Russia}

\begin{abstract}
In the current paper we prove simplicity for the spectrum of Lyapunov exponents for triangle sequence and Selmer algorithm in dimension 3. We introduce a strategy that can be applied for a wide class of Markovian MCF.
\end{abstract}
\maketitle

\section{Introduction}

A \emph{Markovian multidimensional continued fraction algorithm}, as in Lagarias \cite{Lagarias93}, is specified by two piecewiese continuous maps:
$$f: [0,1]^d\rightarrow [0,1]^d;$$
and
$$A: [0,1]^d \rightarrow GL(d+1, \mathbb Z).$$\\

The $n$-th partial quotient matrix of $\theta\in [0,1]^d$ is denoted by $A^{(n)}(\theta)=A(f^{n-1})(\theta)$ and we define the cocycle $C^{n}(\theta)=A^{(n)}\cdots A^{1}(\theta)$.
If the algorithm is weakly convergent, the rows of matrices $C$ provide a simultaneous approximation of $\theta$.

In the present article, we are considering the case of \textit{linear simplex-splitting} MCF.
The parameter space is a d-dimensional simplex that is splitted into a finite or countable number of subsimplices $\Delta_i$, and for each point $x=(x_1,\cdots, x_n)$ in a given subsimplex $\Delta_i$ the map is defined by the formula $$f(x) = \frac {Ax} {\vert\vert Ax\vert\vert}.$$\\

We will consider two specific examples in dimension three : the Triangle Sequence, and Cassaigne algorithm (which is a different coding for dimension three Selmer algorithm). We expect our methods to work for any given linear simplex-splitting MCF. And its application to those two examples will hopefully convince the reader.

\subsection*{The Triangle Sequence.}
Defined by T.~Garrity in 2001 as an iteration of a map on a triangle which yields a sequence of nested triangles \cite{Garrity01}, the \textit{homogeneous triangle sequence} is an algorithm that is almost surely defined by
$$F: (x_1, x_2, x_3)\in\mathbb{R}^3_+ \mapsto x'=(x'_1, x'_2, x'_3),$$
where if $\{i,j,k\} = \{1,2,3\}$ and $x_i\ge x_j \ge x_k$,
\begin{equation*}
x'_i = x_i - x_j - bx_k,\
x'_j = x_j,\
x'_k = x_k,
\end{equation*}
with $b=[\frac{x_i-x_j}{x_k}].$\\
The non-homogeneous triangle sequence (a.k.a. \emph{triangle sequence}) is a renormalized version of the map $F$:
$$f(u) = \frac{F(u)}{\vert F(u) \vert}.$$

Topological ergodicity of the iterations of the algorithm was proved in \cite{Assaf05}.
Ergodicity of the algorithm, as well as weak convergence almost surely, were established in \cite{MessaoudiNogueiraSchweiger09}.

\subsection*{Cassaigne and 3-dimensional Selmer algorithm.}

Introduced by Selmer in 1961 \cite{Selmer61}, the \textit{homogeneous Selmer algorithm} is almost surely defined by,
$$F: (x_1, x_2, x_3)\in\mathbb{R}^3_+ \mapsto x'=(x'_1, x'_2, x'_3),$$
where if $\{i,j,k\} = \{1,2,3\}$ and $x_i\ge x_j \ge x_k$,
\begin{equation*}
	x'_i = x_i - x_k,\
	x'_j = x_j,\
	x'_k = x_k.
\end{equation*}
It can be checked that the subsimplex defined by $x_i < x_j + x_k$ for all $\{i,j,k\} = \{1,2,3\}$ is an invariant attractive subset of this algorithm.\\

Cassaigne proposed a different coding of this algorithm restricted to the aforementioned subset, defined by,
$$F: (x_1, x_2, x_3)\in\mathbb{R}^3 \mapsto
\left\{
\begin{array}{ll}
	(x_1-x_3, x_3, x_2) &\text{if } x_1 > x_3\\
	(x_2, x_1, x_3-x_1) &\text{if } x_3 > x_1
\end{array}
\right.
$$

The ergodicity of Selmer algorithm restricted to its attractive invariant subsimplex as well as weak convergence almost surely is established in \cite{Schweiger00}.\\

In the current paper we introduce Lyapunov exponents for these algorithms and study its properties.
Our main result is the following
\begin{theorem}\label{main}
The Lyapunov spectra of the cocycles associated to the Triangle sequence and Cassaigne algorithms are simple.
\end{theorem}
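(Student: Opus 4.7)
\medskip

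\noindent\textbf{Proof plan.} My approach is to recast each algorithm as a locally constant linear cocycle over a (possibly countable) Bernoulli or Markov shift and then invoke a Guivarc'h--Raugi / Avila--Viana style criterion for simplicity of the Lyapunov spectrum. Concretely, I would first fix for each algorithm the natural symbolic coding: for Cassaigne, the alphabet is $\{1,2\}$ (corresponding to the two branches $x_1>x_3$ and $x_3>x_1$), and for the triangle sequence the alphabet is $\mathbb{N}\times S_3$ (the integer partial quotient $b$ together with the ordering of coordinates). In either case the map $f$ is conjugate to a (topological) Markov shift $\sigma$ on this alphabet, and the matrix $A$ of the algorithm depends only on the first symbol, so the partial products $C^n$ are a locally constant $GL(3,\mathbb Z)$--valued cocycle over $\sigma$. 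The measure of interest is the $f$-invariant absolutely continuous probability measure whose existence is recalled in the introduction; its bounded distortion (Renyi / Schweiger type) gives the Markov shift a quasi-Bernoulli structure with enough mixing to feed into the simplicity criteria.

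Once this setup is in place, the second step is to verify the integrability hypothesis $\log\|A(\theta)\|\in L^{1}$. For Cassaigne this is trivial since there are only finitely many matrices, all of norm $O(1)$. For the triangle sequence the matrix $A_b$ grows linearly in $b$, so one must combine the explicit form of the $f$-invariant density computed in \cite{MessaoudiNogueiraSchweiger09} with the estimate on the size of the cylinder of level $b$ to get $\sum_b \log(b)\,\mu(\Delta_b)<\infty$. This is a direct calculation and I do not expect surprises.

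The heart of the proof is the third step: establishing \emph{pinching} and \emph{twisting} in the sense of Avila--Viana. Pinching requires exhibiting a periodic word $w$ whose associated matrix $C_w$ is conjugate to a diagonal matrix with three distinct positive eigenvalues (equivalently, $C_w$ is hyperbolic with simple spectrum). Twisting requires a second periodic word $w'$ such that $C_{w'}$ sends each eigenspace and each sum of eigenspaces of $C_w$ off every invariant flag of $C_w$, so that no proper $C_w$-invariant subspace can be extended to a continuous invariant subbundle of the full cocycle. For both algorithms my intention is to pick very short loops, compute the matrices by hand, and check pinching by computing the characteristic polynomial and twisting by a direct linear algebra verification that certain determinants built from eigenvectors of $C_w$ and $C_{w'}C_w C_{w'}^{-1}$ are nonzero. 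Because each $A_i$ has integer entries, all the checks reduce to showing that explicit integers are nonzero.

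The main obstacle I anticipate is not any one of these steps in isolation but rather the combination in the triangle-sequence case: since the shift has countably many symbols one must work with the appropriate extension of the Avila--Viana theorem (as in the setup for Rauzy--Veech and the Kontsevich--Zorich cocycle), where the Markov structure is infinitely generated and the invariant measure is only finite after a suitable return to a compact subset. The cleanest way around this is to induce on a well-chosen compact subsimplex (e.g.\ restricting $b$ to a bounded range), obtain a cocycle over a finite-state subshift to which the Avila--Viana criterion applies verbatim, and then transfer simplicity back to the original cocycle using the fact that Lyapunov exponents are preserved under acceleration and that the induced cocycle has the same number of distinct exponents as the original one when the return time is integrable.
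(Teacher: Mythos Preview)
Your plan is correct and matches the paper's overall architecture: accelerate by inducing on a compact subsimplex, obtain a uniformly expanding map whose absolutely continuous invariant measure has bounded distortion, verify log-integrability, and then feed two explicit loop matrices into the Avila--Viana pinching/twisting criterion. The paper diverges from your plan chiefly in \emph{how} pinching and twisting are verified: instead of computing eigenvectors and checking the nonvanishing of determinants (which pushes you into the splitting fields of the characteristic polynomials), it invokes the Galois-theoretic shortcut of Matheus--M\"oller--Yoccoz. One exhibits two loops whose cocycle matrices have characteristic polynomials that are irreducible over $\mathbb{Q}$ with positive, non-square discriminants; this forces three distinct real eigenvalues and Galois group $S_3$, which already gives pinching. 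Twisting is then obtained by checking that the two discriminants are coprime, so the splitting fields are linearly disjoint and the two matrices cannot share a proper invariant subspace. All eigenvector computations are thus replaced by integer arithmetic on two discriminants, which is both shorter and more robust than your proposed direct verification.

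One small correction on the acceleration: the paper induces on $\Delta_{\gamma_*}$ for a fixed \emph{positive} path $\gamma_*$, so that every branch of the accelerated map ends with the same strictly positive matrix and uniform expansion (hence bounded distortion) follows from the Hilbert-metric argument of Avila--Gou\"ezel--Yoccoz. Your parenthetical suggestion of ``restricting $b$ to a bounded range'' does not by itself give a compactly contained subsimplex with this property; you would still need to append a fixed positive tail to get uniform contraction.
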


Simplicity of spectrum of a dynamical cocycle (in particular, in case of multidimensional fraction algorithms) was established in different contexts. In particular, in case of products of random matrices several results were obtained in \cite{Furstenberg63}, \cite{GoldsheidMargulis89} and \cite{GuivarchRaugi86}.
Simplicity of spectrum for Jacobi-Perron algorithm (in any dimension) was proved in \cite{BroiseGuivarch01}.

A breakthrough result was proved in 2005 by A. Avila and M. Viana who introduced the first criterion for simplicity of spectrum for dynamical cocycle (see \cite{AvilaViana07}); they used their criterion in the proof of Kontsevich--Zorich conjecture about the spectrum of Teichmüller flow on the moduli space of Abelian differentials. Later their technique was used by A. Herrera Torres who showed simplicity of spectrum for Selmer MCF (see \cite{Torres14}).

C. Matheus, M. Möller and J.-C. Yoccoz in \cite{MatheusMollerYoccoz15} developed ideas by Avila and Viana and proved a Galois-version of their criterion (see Section \ref{SS} for details). This version was used in \cite{AvilaHubertSkripchenko16} for a fully subtractive algorithm and the Rauzy gasket.

Our proof also uses a slightly modified version of the same criterion: we check that the ergodic measure has bounded distortion in the sense of \cite{AvilaViana07} and then show that there are two cocycles with a special properties, such as Galois-pinching (for both) and twisting (for the second with respect to the first one).

\begin{remark*}
Results proved in \cite{Torres14} and \cite{BroiseGuivarch01} are more general than our statement since in both cases the simplicity of spectrum of the algorithms of any dimension is established, while our algorithms are only defined in dimension three. However, if the dimension is fixed, the strategy we suggest can be applied for a very wide class of algorithms (basically, for any ergodic Markovian algorithm with integrable cocycle) and allows to get a elementary and straight-forward proof.
\end{remark*}

\subsection{Acknowledgments} We heartily thank Carlos Matheus who explained us how to modify the proof of theorem 2.17 in \cite{MatheusMollerYoccoz15} to the case of $SL(n,\Z)$ and some other important remarks. We also thank Valérie Berthé, Pascal Hubert and Vincent Delecroix for several useful discussions.
The work was done while the first author was visiting the Max-Planck-Institut for Mathematics in Bonn, he is grateful to MPI for the excellent working conditions.
The second author was partially supported by RFBR-CNRS grant No.~18-51--15010.

\section{Special Acceleration of Ergodic MCF and its properties}
\subsection{Symbolic dynamics}

A non-homogeneous version of a linear simplex-splitting MCF defines a topological Markov shift over some countable alphabet.
One can associate with this shift a graph that we call \emph{the Rauzy diagram} using terminology from Teichmüller dynamics and interval exchange transformations: vertices of this graph are letters of the alphabet, and two vertices $a$ and $b$ are connected by a directed arrow if and only there exist points $x\in [a]$ and $y\in [b]$ such that $f(x)=y$.
First, we define a notion of a positive loop on the Rauzy graph.

\begin{definition}
A path $\gamma$ on the Rauzy diagram is called \emph{positive} if the corresponding matrix of the cocycle $A_\gamma$ has only positive entries.
\end{definition}

\begin{definition}
A path $\gamma$ is called a \emph{loop} if it starts and finishes in the same vertex of the Rauzy diagram.
\end{definition}

\begin{remark}\label{acceleration}
	Positiveness and ergodicity imply that one can consider the following acceleration of the given algorithm that was first defined in \cite{AvilaGouezelYoccoz06} for the Veech flow and interval exchange transformations (see Section 4.1.3): a \emph{special acceleration} is a first return map to some subsimplex $\Delta_1$ compactly contained in the parameter space $\Delta$ (here $\Delta_1$ = $\Delta_{\gamma_*}$, where $\gamma_*$ is some strictly positive path). Naturally, not all of the orbits of the original algorithm will return to $\Delta_1$; nevertheless, due to ergodicity, it will happen to the orbits of almost every points.
\end{remark}

\section{Uniformly expanding maps}
The main definition that we use comes from \cite{AvilaGouezelYoccoz06}:
\begin{definition}
	\label{uniformlyexp}
Let $L$ be a finite or countable set, let $\Delta$ be a parameter space, and let $\{\Delta^{(l)}\}_{(l\in L)}$ be a partition into open sets of a full measure subset of $\Delta.$
A map $Q: \cup_{l} \Delta^{(l)} \rightarrow \Delta$ is a \emph{uniformly expanding} map if:
\begin{enumerate}
\item For each $l$, $Q$ is a $C^{1}$ diffeomorphism between $\Delta^{(l)}$ and $\Delta$, and there exist constants $k>1$ (independent of $l$) and $C_{(l)}$ such that for all $x\in \Delta^{(l)}$ and all $v\in Q_{x}\Delta, k||v||\le||DQ(x)v||\le C_{(l)}||v||.$
\item Let $J(x)$ be the inverse of the Jacobian of $Q$ with respect to Lebesgue measure. Denote by $\EuScript{H}$ the set of inverse branches of $Q$. The function $log J$ is $C^{1}$ on each set $\Delta^{(l)}$ and there exists $C>0$ such that, for all $h\in \EuScript{H}$,
$$||D((logJ)\circ h)||_{C^{0}(\Delta)}\le C.$$
\end{enumerate}
\end{definition}

\begin{proposition}\label{expanding}
A special acceleration of any ergodic simplex-splitting MCF is uniformly expanding.
\end{proposition}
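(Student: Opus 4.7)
My plan is to view $Q$ on each of its branches as a projective map given by a non-negative integer matrix, and then to deduce both conditions of Definition~\ref{uniformlyexp} from classical convex projective geometry on the simplex (Birkhoff--Hilbert contraction together with Perron--Frobenius distortion estimates), following the strategy used for Rauzy--Veech induction in \cite{AvilaGouezelYoccoz06}. On each level set $\Delta^{(\gamma)}$ of the first-return time, $Q$ coincides with the iterate $f^{n(\gamma)}$, which by the simplex-splitting property is itself projective: $Q(x) = N_\gamma x/\|N_\gamma x\|$ for some integer matrix $N_\gamma$. Equivalently the inverse branch $h_\gamma:\Delta_1\to \Delta^{(\gamma)}\subset\Delta_1$ is given by a non-negative matrix $B_\gamma$, and---because every return orbit has its first $|\gamma_*|$ symbols equal to $\gamma_*$---we obtain a factorisation $B_\gamma = M_{\gamma_*}\cdot M_{r_\gamma}$ with $M_{\gamma_*}$ strictly positive. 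In particular $Q$ is a $C^\infty$ diffeomorphism of each open branch onto $\Delta_1$.

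For condition~(1), I appeal to Birkhoff's theorem: multiplication by a strictly positive matrix strictly contracts the Hilbert projective metric on the open simplex, with a contraction ratio $\tau<1$ that depends only on the entries of the matrix. The left factor $M_{\gamma_*}$ in $B_\gamma$ therefore forces every $h_\gamma$ to contract the Hilbert metric on $\Delta_1$ by at least this same $\tau$, uniformly in the branch. Because $\gamma_*$ is positive, $\Delta_1 = M_{\gamma_*}\Delta$ is compactly contained in $\Delta$, so the Hilbert and Euclidean metrics are bi-Lipschitz equivalent on $\Delta_1$; after possibly replacing $\gamma_*$ by a sufficiently long iterate to absorb the bi-Lipschitz constant, one obtains a uniform Euclidean contraction of ratio $1/k<1$, i.e.\ $\|DQ(x)v\|\ge k\|v\|$ with $k>1$ independent of the branch. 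The upper bound $C_{(l)}$ may depend on $l$ and is immediate from the continuity of the derivative of $N_\gamma x/\|N_\gamma x\|$ on the closure of $\Delta^{(\gamma)}$, where $\|N_\gamma x\|$ is bounded away from zero.

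For condition~(2) a direct computation in affine coordinates on the simplex yields
\[
J(x)=\frac{\langle e, N_\gamma x\rangle^{d+1}}{|\det N_\gamma|},
\qquad
D\bigl((\log J)\circ h_\gamma\bigr)(y) = -(d+1)\,\frac{B_\gamma^{\top} e}{\langle e, B_\gamma y\rangle},
\]
where $e=(1,\ldots,1)$. Setting $w := B_\gamma^{\top} e$ and using that $y\in\Delta_1$ has all coordinates bounded below by some $\varepsilon>0$ by compact containment, one gets $\langle w,y\rangle \ge \varepsilon\sum_j w_j \ge \varepsilon\, w_i$ for each $i$. Hence each component of $w/\langle w,y\rangle$ is bounded by $1/\varepsilon$, uniformly in $\gamma$, which supplies the $C^0$-bound of condition~(2) with a constant depending only on $\Delta_1$.

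The one nontrivial step is the factorisation $B_\gamma = M_{\gamma_*}\cdot M_{r_\gamma}$: it amounts to showing that every return time $n(\gamma)$ is at least $|\gamma_*|$, which may fail if $\gamma_*$ overlaps nontrivially with itself. This degeneracy is excluded by a suitable choice of $\gamma_*$ in the Rauzy diagram (for instance, any positive path visiting a vertex only at its endpoints will do). Once this combinatorial point is settled, the two estimates reduce to standard Hilbert-metric arguments on the open simplex, in direct parallel with the proof for Rauzy--Veech induction in \cite{AvilaGouezelYoccoz06}.
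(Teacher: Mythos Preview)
Your proof is correct and follows essentially the same approach as the paper's: both invoke the Birkhoff--Hilbert contraction coming from the fixed positive factor $M_{\gamma_*}$ (resp.\ $A_{\gamma_*}$) in the inverse-branch matrix to get condition~(1), and both reduce condition~(2) to the Lipschitz bound on $\log J$ for the projective map $x\mapsto Ax/\|Ax\|$, exactly as in \cite{AvilaGouezelYoccoz06}. Your write-up is in fact more explicit---you actually compute $J$ and bound $D(\log J\circ h_\gamma)$ via the column sums of $B_\gamma$, and you flag the combinatorial issue that the return time must be at least $|\gamma_*|$ (handled by choosing a non-self-overlapping $\gamma_*$)---but there is no substantive difference in strategy; the only cosmetic discrepancy is that the paper phrases the factorisation as the path \emph{ending} with $\gamma_*$ whereas you phrase it as \emph{starting} with $\gamma_*$, which amounts to the same Birkhoff argument.
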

\begin{proof}
	The proof repeats verbatim the first part of the proof of Lemma 4.4  in \cite{AvilaGouezelYoccoz06} (see also \cite{SkripchenkoTroubetzkoy18} for more detailed proof).
	It relies on the fact that we consider a special acceleration (defined in Remark \ref{acceleration}).
	Condition (1) of definition \ref{uniformlyexp} comes from the fact that each path in the accelerated version ends with the same positive loop $\gamma_*$, thus the cocycle matrix is a product of two matrices $A_{\gamma_*} A_0$ where $A_0$ is weakly contracting (is not expanding) and $A_{\gamma_*}$ is strongly contracting (contraction coefficient is strongly larger than 1) with respect to Hilbert metric and is common to all accelerated path.
	Condition (2) is checked by computing the log of the Jacobian of the map $h: x \to Ax/||Ax||$, which is $d$-Lipschitz with respect to the Hilbert diameter of the simplex $\Delta_1$.
\end{proof}

\begin{remark*}
As we mentioned above, special acceleration can be considered as the first return map to the subsimplex compactly embedded in a positive cone. Therefore, the Hilbert metric in the proof of Proposition \ref{expanding} is a finite one; so, uniform expanding with respect to this metric implies uniform expanding with respect to Euclidean metric.
\end{remark*}

Ergodic properties of uniformly expanding maps are well-known (see Section 2 in \cite{AvilaGouezelYoccoz06}, Theorem 1.3 in \cite{Mane87} and Section 4 in \cite{Aaronson97}):
\begin{proposition}\label{measure}
Any uniformly expanding map admits a unique absolutely continuous invariant measure; this measure is ergodic, and, moreover, mixing.
\end{proposition}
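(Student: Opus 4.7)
The plan is to study the Perron--Frobenius (transfer) operator
$$\mathcal{L}\phi(x) \;=\; \sum_{h\in\EuScript{H}} J(h(x))\,\phi(h(x)),$$
whose fixed points in $L^{1}(\Delta,\mathrm{Leb})$ are precisely the densities of absolutely continuous $Q$--invariant probability measures. Everything asserted in the proposition follows once one establishes that $\mathcal{L}$ is a strict contraction, in the Hilbert projective metric, on a suitable cone of log--Lipschitz densities.

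The first step is to iterate the bounded distortion estimate of condition (2) of Definition \ref{uniformlyexp}. A depth--$n$ inverse branch is a composition $h = h_{1}\circ\cdots\circ h_{n}$ in which each factor contracts distances by at least $k^{-1}$ by condition (1). Therefore the $C^{0}$--bound $\|D((\log J)\circ h_{i})\|\le C$ chains into a geometric sum with ratio $k^{-1}$, producing a distortion bound for the Jacobian $J^{(n)}$ along $h$ that is uniform in the depth $n$. This all--scales distortion estimate is what lets the cone
$$\mathcal{C}_{M} \;=\; \bigl\{\rho>0\text{ continuous}\,:\,|\log\rho(x)-\log\rho(y)|\le M\|x-y\|\bigr\}$$
be preserved by $\mathcal{L}$ for $M$ chosen large enough, and it yields uniform upper and lower bounds on $\mathcal{L}^{n}\mathbf{1}$ on $\Delta$.

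Next I would extract an invariant density $\rho_{*}\in\mathcal{C}_{M}$ as a subsequential limit of the Cesàro averages of $\mathcal{L}^{n}\mathbf{1}$, using Arzelà--Ascoli for compactness. Birkhoff's theorem on positive operators applied to $\mathcal{L}\colon\mathcal{C}_{M}\to\mathcal{C}_{M}$ then does three things at once. Because the accelerated map factors through a common positive loop $\gamma_{*}$ (see Remark \ref{acceleration}), the image $\mathcal{L}\,\mathcal{C}_{M}$ has finite projective diameter; hence $\mathcal{L}$ contracts the Hilbert metric on $\mathcal{C}_{M}$ at a uniform geometric rate. This pins down the invariant density as unique, gives ergodicity of the measure $\rho_{*}\,d\mathrm{Leb}$, and delivers exponential decay of correlations between Lipschitz and $L^{\infty}$ observables, which in particular implies mixing.

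The main technical obstacle is the distortion--at--all--scales step: one must check that the local $C^{0}$--bound on $D((\log J)\circ h)$ furnished by condition (2) really combines across all depths into a summable series with ratio $k^{-1}$, rather than exploding linearly in $n$, and it is precisely here that the uniform expansion constant $k>1$ of condition (1) is needed. Once that estimate is in hand, the rest of the argument is the classical Doeblin--Fortet / Lasota--Yorke machinery, implemented for exactly this setting in Section 2 of \cite{AvilaGouezelYoccoz06}; so the proof can reasonably be cited rather than reproduced in full.
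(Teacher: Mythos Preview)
The paper offers no proof of this proposition at all: it is simply quoted as a well-known fact, with pointers to Section~2 of \cite{AvilaGouezelYoccoz06}, Theorem~1.3 of \cite{Mane87} and Section~4 of \cite{Aaronson97}. Your sketch is precisely the transfer-operator/cone argument carried out in those references (especially \cite{AvilaGouezelYoccoz06}), so in that sense you are not diverging from the paper but rather unpacking the citation it makes.

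One small point deserves tightening. The proposition is stated for an \emph{arbitrary} uniformly expanding map in the sense of Definition~\ref{uniformlyexp}, yet to obtain finite projective diameter of $\mathcal{L}\,\mathcal{C}_{M}$ you invoke the common positive loop $\gamma_{*}$ from Remark~\ref{acceleration}, which is a feature of the particular special acceleration and not part of the general definition. In fact the positive loop is not needed here: Definition~\ref{uniformlyexp} already requires that every inverse branch maps $\Delta$ onto the \emph{full} simplex $\Delta$, and together with the uniform distortion bound (your chained estimate) this is enough to force $\mathcal{L}\,\mathcal{C}_{M}$ to have finite Hilbert diameter. So the argument goes through, but you should replace the appeal to Remark~\ref{acceleration} by the full-branch property built into condition~(1) of the definition; otherwise you are proving a narrower statement than the one asserted.
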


Moreover, this ergodic measure has bounded distortion in a sense of Avila--Viana:

\begin{definition}
	An invariant measure $\mu$ on a topological Markov shift $\Sigma_M$ has the \emph{bounded distortion property} if there exists a positive constant
	$C(\mu)$ such that for any (non-empty) cylinder set $[a_{i_1} \cdots a_{i_n}]$ and any $1 \le j \le n$ we have
	$$
	\frac{1}{C(\mu)}  \le \frac{\mu([a_{i_1} \dots a_{i_n}])}{\mu([a_{i_1}
	\dots a_{i_j}]) \mu([a_{i_{j+1}} \dots a_{i_n}])}
	\le C(\mu).
	$$
\end{definition}

We recall that this property follows from the uniformly expanding property:
\begin{lemma}
The absolutely continuous ergodic measure of a special acceleration of a simplex-splitting MCF has bounded distortion.
\end{lemma}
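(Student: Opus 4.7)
The plan is to deduce the bounded distortion property from the two axioms of uniformly expanding maps in Definition~\ref{uniformlyexp}, applied to the special acceleration $Q$ supplied by Proposition~\ref{expanding}, and then transfer the estimate from Lebesgue measure to $\mu$ via a density bound.

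First I would show that for every inverse branch $h = h_{i_1}\circ h_{i_2}\circ\cdots\circ h_{i_n}$ of an iterate $Q^n$, the function $\log J_h$ has Lipschitz constant on $\Delta$ bounded by a constant $D$ that is independent of $n$ and of the choice of cylinder $[a_{i_1}\cdots a_{i_n}]$. Using the chain rule $\log J_h = \sum_{r=1}^{n} (\log J)\circ h_{i_r}\circ h_{i_{r+1}}\circ\cdots\circ h_{i_n}$, each term is a composition with a map whose derivative is contracted by at least a factor $k^{-(n-r)}$ by axiom~(1), and whose Lipschitz constant at the outermost stage is bounded by $C$ from axiom~(2). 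Summing the geometric series in $k^{-1}$ yields the uniform Lipschitz bound, and therefore the distortion inequality
\[
\frac{J_h(x)}{J_h(y)} \le e^{D\,\mathrm{diam}(\Delta)} =: K \quad\text{for all } x,y\in\Delta.
\]

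Second, I would write the cylinder $[a_{i_1}\cdots a_{i_n}]$ as $h(\Delta)$ for the inverse branch $h$ above. Change of variables gives $\mathrm{Leb}([a_{i_1}\cdots a_{i_n}]) = \int_\Delta J_h\,d\mathrm{Leb}$, and the distortion estimate above shows $\mathrm{Leb}([a_{i_1}\cdots a_{i_n}]) \asymp J_h(y_0)\cdot \mathrm{Leb}(\Delta)$ for any base point $y_0\in\Delta$, with multiplicative constants depending only on $K$ and $\mathrm{Leb}(\Delta)$. Splitting $h = h'\circ h''$ with $h' = h_{i_1}\circ\cdots\circ h_{i_j}$ and $h'' = h_{i_{j+1}}\circ\cdots\circ h_{i_n}$ and using the multiplicativity $J_h(y) = J_{h'}(h''(y))\cdot J_{h''}(y)$, one more application of the distortion bound replaces $J_{h'}(h''(y_0))$ by $J_{h'}(y_0)$ up to the factor $K$, and yields
\[
\frac{1}{K'}\;\le\; \frac{\mathrm{Leb}([a_{i_1}\cdots a_{i_n}])}{\mathrm{Leb}([a_{i_1}\cdots a_{i_j}])\cdot \mathrm{Leb}([a_{i_{j+1}}\cdots a_{i_n}])}\;\le\; K'
\]
for some $K'$ depending only on $K$ and $\mathrm{Leb}(\Delta)$.

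Finally, to upgrade from Lebesgue to the invariant measure $\mu$ of Proposition~\ref{measure}, I would invoke the well-known fact that for uniformly expanding maps the density $\rho = d\mu/d\mathrm{Leb}$ is bounded above and below away from $0$ on $\Delta$ (this follows from the same Lipschitz estimate on $\log J_h$ via a standard Lasota--Yorke / transfer operator argument, and is proved in Section~2 of \cite{AvilaGouezelYoccoz06}). Since $\mu(E) \asymp \mathrm{Leb}(E)$ uniformly in $E$, the previous Lebesgue estimate transfers directly to $\mu$ with an adjusted constant $C(\mu)$. The main substantive step is really step one (propagating distortion through arbitrary compositions of inverse branches); the density bound in the last step is standard but is the place where one must rely on external general theory of uniformly expanding maps rather than a direct computation.
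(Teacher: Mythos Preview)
Your argument is correct and follows essentially the same route as the paper: both derive bounded distortion from the uniformly expanding property established in Proposition~\ref{expanding}. The paper's proof, however, is just a two-line citation --- it invokes Lemma~7.2 and Appendix~A of \cite{AvilaViana07} to say that uniform expansion implies product structure, which is equivalent to bounded distortion --- whereas you have unpacked precisely the content of those cited results (the geometric-series Lipschitz bound on $\log J_h$, the resulting Lebesgue distortion estimate on cylinders, and the transfer to $\mu$ via the density bound). So your proof is a faithful expansion of what the paper leaves as a black-box reference, not a genuinely different approach.
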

\begin{proof}
	It was observed by Avila and Viana in \cite{AvilaViana07} (Appendix A) that the bounded distortion property of the measure is equivalent to the property of product structure and the last property is implied by expanding property of the map (Lemma 7.2 in \cite{AvilaViana07}) that was proved above in Proposition \ref{expanding}.
\end{proof}

\section{Lagarias conditions}
Following \cite{Lagarias93}, we define a list of properties of the Markovian multidimensional fraction algorithm that are sufficient to define Lyapunov exponents using Oseledets theorem and to check that the convergence rate of the algorithm can be estimated in terms of these exponents. We verify these properties for a special acceleration of triangle sequence and Cassaigne algorithms and discuss when do they hold for the special accelerations of other algorithms.

\subsection{Property H1: Ergodicity}
It follows from Proposition \ref{measure} that the invariant measures for the accelerated triangle sequence and Cassaigne algorithm are absolutely continuous with respect to Lebesgue measure and their density are bounded from above and from zero.

\subsection{Property H2: Covering property}
The map $f$ is piecewise continuous with non-vanishing Jacobian almost everywhere. This property obviously holds for all simplex-splitting algorithms.

\subsection{Property H3: Semi-weak convergence}
It was mentioned in \cite{Lagarias93} that it is enough to show that MCF is mixing with respect to the invariant measure, which follows from Proposition \ref{measure}.

More explicitly, it can be checked as follows: since the cylinders of the Markov partition are all simplices whose vertices are given by the rows of a convergence matrix it is sufficient to show that
the diameters of the corresponding cylinders of the associated Markov partition decreases geometrically for the set of full measure (see Section 6 in \cite{Lagarias93} for the details). The property follows from
the positivity of the matrix of the accelerated algorithm (as well as from bounded distortion property).

\subsection{Property H4: Boundedness}
This property is the standard log-integrability of the cocycle that is used in Oseledets theorem. Our proof is similar to the results in \cite{Lagarias93} for Selmer and Jacobi-Perron algorithms, and concern the slow version of the algorithm:
\begin{lemma}
The cocycle of triangle sequence is log-integrable:
$$\int_{\Delta}\log(\max(1, \vert{A}\vert))d\mu< \infty.$$
\end{lemma}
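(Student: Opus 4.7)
The approach is standard in the style of Lagarias: bound the matrix norm by the partial quotient, reduce to a Lebesgue tail estimate on the simplex, and sum.

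First, I would inspect the matrix $A$ of the slow triangle sequence on the subsimplex $\{x_i \ge x_j \ge x_k\}$. There the map is linear and, apart from an ambient permutation of coordinates, $A$ has integer entries belonging to $\{-b,-1,0,1\}$ (or $\{0,1,b\}$ after inversion), where $b=\lfloor(x_i-x_j)/x_k\rfloor$. In particular $\max(1,\Vert A\Vert)\le C(1+b(x))$ for an absolute constant $C$, so the claim reduces to
$$\int_{\Delta}\log\bigl(1+b(x)\bigr)\,d\mu(x)<\infty.$$
Since Property H1 guarantees that $\mu$ is absolutely continuous with a density bounded above, it suffices to prove the same integrability with respect to Lebesgue measure on the simplex $\Delta=\{x_1+x_2+x_3=1,\,x_i\ge 0\}$.

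The core estimate is a bound on the super-level set $\{b\ge n\}$. By symmetry I may restrict to the region $x_1\ge x_2\ge x_3$, where $b(x)=\lfloor(x_1-x_2)/x_3\rfloor$. Writing $x_1=1-x_2-x_3$, the condition $b(x)\ge n$ translates into $1-2x_2-x_3\ge n\,x_3$, i.e.\ $x_3\le (1-2x_2)/(n+1)$ with $x_2\in[1/3,1/2]$. Integrating gives $\mathrm{Leb}\{b\ge n\}=O(1/n)$, and hence, by differencing, $\mathrm{Leb}\{b=n\}=O(1/n^2)$.

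Finally I would conclude with the tail sum
$$\int_{\Delta}\log(1+b)\,d\mu\;\le\; C\sum_{n\ge 0}\log(n+2)\,\mathrm{Leb}\{b=n\}\;\le\; C'\sum_{n\ge 0}\frac{\log(n+2)}{(n+1)^{2}}<\infty.$$
(Equivalently, the layer-cake identity $\int\log(1+b)\,d\mu=\int_0^{\infty}\mu\{b>e^{t}-1\}\,dt$ combined with the bound $\min(1,C/(e^{t}-1))$ for the integrand works just as well.) The two other orderings of the coordinates contribute symmetric terms and are handled identically.

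The only mildly delicate point is the combinatorial/geometric step of computing $\mathrm{Leb}\{b\ge n\}$: one must track the intersection of the halfspace $x_1-x_2\ge n\,x_3$ with the ordered region of the simplex and verify that the leading asymptotic is $O(1/n)$ uniformly in $n$. Once this is in place the rest is a one-line summation. Everything else --- boundedness of the density of $\mu$ and boundedness of the matrix entries by $b$ --- is immediate from earlier results and the explicit formula for the algorithm.
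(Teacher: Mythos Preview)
Your proposal is correct and follows essentially the same approach as the paper: bound $\Vert A\Vert$ linearly in $b$, show that the level set $\{b=n\}$ has Lebesgue area of order $1/n^2$, and conclude by the convergence of $\sum \log n/n^2$. The paper does the area estimate by writing down the three vertices of the triangle $\{b=n\}$ explicitly and computing its area as $\frac{1}{4(b+3)(b+4)}$, whereas you obtain the same $O(1/n^2)$ via the super-level sets; your explicit mention of the bounded density of $\mu$ to pass to Lebesgue measure is a point the paper leaves implicit. One minor slip: in your parametrization the range of $x_2$ on the ordered chamber is $[0,1/2]$ (with the constraint $x_3\le x_2$), not $[1/3,1/2]$, but this does not affect the $O(1/n)$ bound.
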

\begin{proof}
The statement follows from a direct calculation: for each step of the algorithm the matrix norm grows with linearly with a parameter $b$:
$$\vert{A}\vert = b+4$$
and the measure (area) of the corresponding subsimplex decreases quadratically with the same parameter $b$. Indeed, using renormalization condition $x_i+x_j+x_k=1$ we exclude one coordinate (say, $x_2$) and so it is easy to see that subsimplex $\Delta_i$ is a triangle with the following vertices: $(\frac{1}{2},0)$; $(\frac{b+1}{b+3}, \frac{1}{b+3})$; $(\frac{b+2}{b+4}, \frac{1}{b+4})$. Therefore $$\mu(\Delta_i)=\frac{1}{4(b+3)(b+4)}.$$
The statement about log-integrability follows from the convergence of the series $\Sigma_{n}\frac{\log n}{n^2}$.
\end{proof}

\begin{lemma}
The cocycle of Cassaigne algorithm is log-integrable.
\end{lemma}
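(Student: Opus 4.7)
The key point is that the Cassaigne algorithm is structurally much simpler than the triangle sequence, since its defining map has only two branches, each realized by a single fixed integer linear map (there is no unbounded integer parameter analogous to the $b$ of the triangle sequence). Writing $F(x)=A(x)\cdot x$, one reads directly from the definition that $A(x)$ equals either
$$M_1=\begin{pmatrix} 1 & 0 & -1 \\ 0 & 0 & 1 \\ 0 & 1 & 0 \end{pmatrix} \quad(\text{when } x_1>x_3),$$
or
$$M_2=\begin{pmatrix} 0 & 1 & 0 \\ 1 & 0 & 0 \\ -1 & 0 & 1 \end{pmatrix} \quad(\text{when } x_3>x_1).$$
Both matrices have entries in $\{-1,0,1\}$, so their norms (and the norms of their inverses, which are also integer and bounded) are dominated by a universal constant $M$ independent of the point $x\in\Delta$.

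The plan is thus simply to make this observation explicit and conclude. Setting $M=\max(|M_1|,|M_2|)$, one has $|A(x)|\le M$ pointwise on $\Delta$, hence
$$\int_\Delta \log(\max(1,|A(x)|))\,d\mu \;\le\; \log(\max(1,M))\cdot\mu(\Delta) \;<\;\infty,$$
since the invariant measure $\mu$ coming from Proposition~\ref{measure} is finite on $\Delta$. In contrast to the triangle-sequence lemma, no countable summation over subsimplices is required and no area estimate is needed --- Cassaigne's coding of Selmer is precisely the reparametrization that removes the unbounded one-step matrices that made the triangle-sequence argument a genuine computation. There is therefore no real obstacle to overcome: the entire content of the lemma reduces to writing down the two matrices and remarking that a finite maximum of bounded quantities is bounded.
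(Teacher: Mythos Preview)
Your argument is correct and is exactly the paper's approach: the authors simply note that the cocycle takes only two finite values on the two pieces of the simplex, which is precisely your observation made explicit. No further comment is needed.
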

\begin{proof}
	It is clear since the cocycle take only two finite values on two parts of the simplex.
\end{proof}

The result for the acceleration follows immediately (see \cite{AvilaViana07}, page 46): one can check that the induced cocycle is log-integrable with respect to induced measure if the original cocycle is log-integrable with respect to the original measure (and the Lyapunov exponents get multiplied by the inverse of the measure of the subsimplex).

\subsection{Property H5: Partial Quotient Mixing}
All available quotient matrices are non-negative. For any $\theta\in[0,1]^d$ where $d$ is a dimension of the algorithm we set
$\nu(\theta)=\min\{k: C^{(k)}(\theta)= A^{(k)}(\theta)\cdots A^{(1)}(\theta)\  \text{is strictly positive}\}$ and $\nu(\theta)=\infty$ if such a $k$ does not exist.

The condition is the following: $\int_{\Delta}\nu(\theta)d\lambda<\infty$, where $\lambda$ is a Lebesgue measure.\\

Note that in the triangle sequence algorithm, if we start with $x_i>x_j>x_k$, after the first step we get $x'_i<x'_k<x'_j$.
Now a direct calculation shows that the matrix becomes strictly positive already after 4 steps of the algorithm.

\begin{remark*}
Properties H1--H3 are satisfied for every special acceleration of any ergodic algorithm.
Only property H4 need to be checked on the algorithm before acceleration, but has already been proven for a large number of MCF.
\end{remark*}

\subsection{Lyapunov exponents and Convergence}
It was shown by Lagarias in \cite{Lagarias93} (see Theorem 4.1) that is conditions H1--H4 are satisfied, then the convergence rate of the algorithm for almost all the parameters can be estimated in the following way:

$$\eta(\theta)\ge 1-\frac{\lambda_2}{\lambda_1}.$$ Here $\eta(\theta)$ is the best uniform approximation exponent.

Moreover, if H5 is also satisfied, then there is a set of Lebesgue measure one for which the following equality holds for the uniform approximation exponent: $$\eta^{*}(\theta)= 1-\frac{\lambda_2}{\lambda_1}.$$

This gives a interesting application to our main result.
\begin{corollary}
	The triangle sequence and Cassaigne algorithm have strictly positive best uniform approximation exponents. Moreover the triangle sequence algorithm cannot be an optimal algorithm.
\end{corollary}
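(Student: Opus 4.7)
The plan is to combine Theorem~\ref{main} with the formula $\eta^{*}(\theta) = 1 - \lambda_{2}/\lambda_{1}$ recalled just before the corollary, using the additional input that the cocycle has unit determinant. The whole argument reduces to elementary inequalities between the three Lyapunov exponents.

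First I would observe that for both algorithms the partial quotient matrices lie in $SL(3,\Z)$: for the triangle sequence the step matrix is upper-triangular with $1$'s on the diagonal, and for Cassaigne the two possible matrices are elementary unimodular permutations-plus-additions. Therefore $\log|\det C^{n}|\equiv 0$, and the three Lyapunov exponents satisfy the trace constraint $\lambda_{1}+\lambda_{2}+\lambda_{3}=0$. By Theorem~\ref{main} they are pairwise distinct, so $\lambda_{1}>\lambda_{2}>\lambda_{3}$. Two consequences follow immediately: summing with the trace constraint yields $\lambda_{1}>0$, and from $\lambda_{2}>\lambda_{3}=-\lambda_{1}-\lambda_{2}$ we obtain the strict inequality $\lambda_{2} > -\lambda_{1}/2$.

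For the first statement of the corollary, $\lambda_{2}<\lambda_{1}$ together with $\lambda_{1}>0$ gives $\lambda_{2}/\lambda_{1}<1$, hence $\eta^{*}(\theta)=1-\lambda_{2}/\lambda_{1}>0$ on a set of full Lebesgue measure, which is exactly the announced strict positivity. For the second statement, I would appeal to the Dirichlet upper bound: in dimension $d=2$ the best uniform simultaneous approximation exponent cannot exceed $1+\tfrac{1}{d}=\tfrac{3}{2}$, and an algorithm is called \emph{optimal} when this bound is attained for almost every parameter. Via Lagarias' formula, optimality would force $\lambda_{2}/\lambda_{1}=-1/2$, equivalently $\lambda_{2}=\lambda_{3}=-\lambda_{1}/2$. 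This equality is precisely what the strict inequality $\lambda_{2}>-\lambda_{1}/2$ (which is itself a reformulation of $\lambda_{2}>\lambda_{3}$) excludes. Hence $\eta^{*}(\theta)<\tfrac{3}{2}$ almost everywhere and the triangle sequence is not optimal.

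The proof is essentially formal once Theorem~\ref{main}, the Lagarias conditions H1--H5 verified in the previous section, and the formula $\eta^{*}=1-\lambda_{2}/\lambda_{1}$ are in hand, so there is no real obstacle; the only point deserving care is recording the precise normalization of the word ``optimal'' as the saturation of the Dirichlet bound $1+1/d$ for simultaneous approximation of two reals, which I would state explicitly to make the dichotomy $\eta^{*}=3/2$ versus $\eta^{*}<3/2$ unambiguous.
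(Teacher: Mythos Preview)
Your proof is correct and follows the same route as the paper: you use the determinant-one constraint to get $\lambda_1+\lambda_2+\lambda_3=0$, invoke Theorem~\ref{main} for the strict inequalities, and then read off both conclusions from Lagarias' formula, observing that optimality would force $\lambda_2=\lambda_3$. The paper's own proof is the same argument stated more tersely (it phrases the obstruction as ``unless $\lambda_2=\dots=\lambda_d$''), so your version simply makes the arithmetic in the $3\times 3$ case explicit.
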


\begin{proof}
	The first statement if straightforward. For the second statement, one has to remark that the sum of Lyapunov exponents is equal to zero, since the matrices of the cocycle have determinant one.
	As proved by Lagarias, the uniform approximation exponent is always bounded by $1 + 1/d$, thus an algorithm satisfying H5 cannot be optimal, \textit{i.e.} to be an equality case for this bound, unless $\lambda_2 = \dots = \lambda_d$.
\end{proof}

\section{Simplicity of Spectrum}\label{SS}
In this section we prove our main result (Theorem \ref{main}). First, we show simplicity for the spectrum of accelerated version of the triangle sequence; as was mentioned above, Avila and Viana showed \cite{AvilaViana07} that simplicity of spectrum of the induced cocycle implies the same property for the original one.

The proof is based on the so-called Galois-version of simplicity criterion proved by J.-C. Yoccoz, C. Matheus and M. Möller in 2015 (see \cite{MatheusMollerYoccoz15}). This work developed the ideas suggested in \cite{AvilaViana07}.

Avila and Viana showed that (see also Theorem 2.13 \cite{MatheusMollerYoccoz15})
\begin{theorem}\label{AV}
Let $\mu$ be an $T$-invariant probability measure on the phase space $\Sigma$ of the Markov shift $T$  with the bounded distortion property. Let $A$ be a locally constant integrable $\mathbb G$-valued cocycle ($\mathbb G$ is, for example,  $GL(d,\mathbb R)$). Assume that $A$ is pinching and twisting. Then, the Lyapunov spectrum of $(T, A)$ with respect to $\mu$ is simple.
\end{theorem}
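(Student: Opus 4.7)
The plan is to reduce simplicity of the Lyapunov spectrum to a uniqueness statement for invariant measures on the associated Grassmannian bundles, and then to establish that uniqueness by combining pinching and twisting with the bounded distortion property.

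First, for each $1 \le k \le d-1$, I would consider the skew-product $\hat T_k(x,V) = (Tx, A(x)V)$ on $\Sigma \times \mathrm{Gr}(k,d)$. By the Oseledets--Ledrappier circle of ideas, the strict inequality $\lambda_k > \lambda_{k+1}$ is equivalent to the statement that every $\hat T_k$-invariant probability measure projecting to $\mu$ is supported on the graph of the measurable $k$-dimensional Oseledets subspace. So simplicity of the whole spectrum reduces to proving that such a lifted invariant measure is unique, one value of $k$ at a time.

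Second, I would exhibit a canonical invariant measure using pinching. By hypothesis there is a periodic point $p$ whose holonomy matrix $A_p$ has $d$ eigenvalues of pairwise distinct absolute values; its eigendirections yield a full flag $F^\bullet$, and on the periodic fiber over $p$ this flag produces an attracting configuration for the Grassmannian dynamics. Propagating this configuration along inverse branches of the Markov shift and averaging yields an $\hat T_k$-invariant measure $\eta^*$ whose disintegration over $\mu$ is, almost surely, a Dirac mass on a measurable $k$-plane $V^*(x)$. Bounded distortion is exactly what makes this averaging converge and supplies the uniform regularity of the inverse branches required to define the average in the first place.

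Third, I would show any other invariant measure $\eta$ projecting to $\mu$ coincides with $\eta^*$. Disintegrating yields conditional measures $\eta_x$; restricting to trajectories that pass through the pinching periodic orbit and iterating $A_p^{-n}$ forces $\eta_x$ to concentrate on a $k$-subspace built from some subflag of $F^\bullet$. The twisting hypothesis then supplies, for every subflag other than the attracting one, a cocycle matrix that moves it out of the configuration, excluding all possibilities except the attracting $k$-subspace. Combined with a mixing step (again powered by bounded distortion), this forces $\eta = \eta^*$ and gives simplicity.

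The hard part will be the third step: upgrading bounded distortion into the quantitative control that actually identifies the conditional measure $\eta_x$ with a single Dirac mass, rather than with some nondegenerate distribution supported on a subvariety of flags. This is where the u-state machinery of \cite{AvilaViana07} must be deployed, and one cannot shortcut the argument by invoking the Oseledets decomposition, since its simple form is precisely what is being established. Avoiding this circularity forces the entire identification to be carried out intrinsically at the level of the Markov cocycle, and the resulting combinatorial/measure-theoretic bookkeeping is the real content of the theorem.
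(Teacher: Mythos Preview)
The paper does not supply a proof of Theorem~\ref{AV}: it is quoted as a result of Avila and Viana (see \cite{AvilaViana07}, and Theorem~2.13 in \cite{MatheusMollerYoccoz15}) and is used only as a black box once bounded distortion, pinching and twisting have been verified. There is therefore no proof in the paper to compare your sketch against.

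Judged on its own terms, your outline captures the broad shape of the Avila--Viana argument but misstates the key reduction. In step~2 you assert that $\lambda_k>\lambda_{k+1}$ is \emph{equivalent} to every $\hat T_k$-invariant lift of $\mu$ being the Dirac mass on the Oseledets $k$-plane section. In the two-sided setting in which Avila and Viana work this is false (each Oseledets subspace of the correct dimension gives an invariant section, so simple spectrum already produces several ergodic lifts, not one), and in any case it is not what they prove. Their reduction is to uniqueness among \emph{u-states}---invariant lifts whose conditionals along local unstable sets are absolutely continuous with controlled densities---and bounded distortion is precisely what makes the u-state class nonempty and stable under the averaging and martingale arguments. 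Pinching and twisting then force every u-state to be a Dirac on a measurable section, and a separate Ledrappier-type step converts that into $\lambda_k>\lambda_{k+1}$. You mention the u-state machinery only in your final paragraph; if you rewrite steps~2--4 so that the entire argument takes place inside the u-state class rather than among arbitrary invariant lifts, the sketch becomes a faithful summary of \cite{AvilaViana07}.
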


Precise definitions of twisting and pinching properties can be found in \cite{AvilaViana07}.
We use a slightly different notion introduced in \cite{MatheusMollerYoccoz15}:
\begin{definition}
The element $A\in SL(3,\mathbb Z)$ is called \emph{Galois-pinching} if its characteristic polynomial is irreducible over $\mathbb Q$, all its roots are real and the Galois group is the largest possible (namely, $S_3$).
\end{definition}
It was shown in \cite{MatheusMollerYoccoz15} that Galois-pinching property of a given matrix $A$ of the cocycle implies pinching of the cocyle in a sense of \cite{AvilaViana07} (see Proposition 4.2, the proof works verbatim for the case of $SL(n,\mathbb Z)$).

Now, it was also proved in \cite {MatheusMollerYoccoz15} that a cocycle is \emph{twisting} if it admits an element $B$ satisfying the following conditions: $B$ is Galois-pinching, and $A$ and $B$ don't share a common proper invariant subspace (it follows from Theorem 4.6 and an argument used in Theorem 5.4 to replace $B^2$ by $B$).

We already checked that our measure has bounded distortion. Therefore, it is enough to find two explicit paths for which the cocycles are Galois-pinching and one is twisting with respect to the other.  Will we see that it is enough that their two discriminants are positive integers which are not a square and that they are coprime.

\subsection*{Triangle sequence}
Consider the following paths in the triangle sequence algorithm, coded by the order of the coordinates, $(i,j,k)$ if $x_i < x_j < x_k$, and the number $b$:
\begin{align*}
	\gamma_1 &= ((1, 2, 3), 0) \rightarrow ((3, 1, 2), 1) \rightarrow ((2, 3, 1), 2),\\
	\gamma_2 &= ((1, 3, 2), 0) \rightarrow ((2, 1, 3), 0) \rightarrow ((3, 2, 1), 3)
\end{align*}
Along these paths the cocycles are equal to,
\begin{align*}
A^{\gamma_1} &=
\left(\begin{array}{rrr}
1 & 0 & 0 \\
0 & 1 & 0 \\
0 & 1 & 1
\end{array}\right)
\left(\begin{array}{rrr}
1 & 0 & 0 \\
1 & 1 & 1 \\
0 & 0 & 1
\end{array}\right)
\left(\begin{array}{rrr}
1 & 2 & 1 \\
0 & 1 & 0 \\
0 & 0 & 1
\end{array}\right)  =
\left(\begin{array}{rrr}
1 & 2 & 1 \\
1 & 3 & 2 \\
1 & 3 & 3
\end{array}\right) \\
A^{\gamma_2} &=
\left(\begin{array}{rrr}
1 & 0 & 0 \\
0 & 1 & 1 \\
0 & 0 & 1
\end{array}\right)
\left(\begin{array}{rrr}
1 & 0 & 0 \\
0 & 1 & 0 \\
1 & 0 & 1
\end{array}\right)
\left(\begin{array}{rrr}
1 & 1 & 4 \\
0 & 1 & 0 \\
0 & 0 & 1
\end{array}\right)  =
\left(\begin{array}{rrr}
1 & 1 & 4 \\
1 & 2 & 5 \\
1 & 1 & 5
\end{array}\right)
\end{align*}
their characteristic polynomials are,
\begin{align*}
	P^{\gamma_1} &= x^{3} - 7x^{2} + 6x - 1,\\
	P^{\gamma_2} &= x^3 - 8x^2 + 7x - 1,
\end{align*}
which discriminants are equal to,
\begin{align*}
	\Delta(P^{\gamma_1}) &= 697 = 17 \cdot 41,\\
	\Delta(P^{\gamma_2}) &= 257.
\end{align*}

\subsection*{Cassaigne algorithm}
We code the path by words in numbers $1$, $2$, weather they satisfy the first of second case in the definition of the function at each step.
Consider the paths,
\begin{align*}
	\gamma_1 &= 2 \rightarrow 1 \rightarrow 2 \rightarrow 2 \rightarrow 1\\
	\gamma_2 &= 1 \rightarrow 2 \rightarrow 2 \rightarrow 2 \rightarrow 1 \rightarrow 2 \rightarrow 1
\end{align*}
Along these paths the cocycles are equal to,
\begin{align*}
A^{\gamma_1} &=
\left(\begin{array}{rrr}
1 & 2 & 1 \\
1 & 1 & 1 \\
1 & 2 & 2
\end{array}\right)\\
A^{\gamma_2} &=
\left(\begin{array}{rrr}
1 & 2 & 2 \\
2 & 4 & 3 \\
1 & 1 & 1
\end{array}\right)
\end{align*}
their characteristic polynomials are,
\begin{align*}
	P^{\gamma_1} &= x^{3} - 4 x^{2} + 1\\
	P^{\gamma_2} &= x^{3} - 6 x^{2} + 1,
\end{align*}
which discriminants are equal to,
\begin{align*}
	\Delta(P^{\gamma_1}) &= 229,\\
	\Delta(P^{\gamma_2}) &= 3^3 \cdot 31.
\end{align*}

\subsection*{Pinching and twisting}
For both algorithms, we establish a proof of the two following propositions.
\begin{proposition}\label{pinching}
	The matrices $A^{\gamma_1}$ and $A^{\gamma_2}$ are Galois-pinching,
\end{proposition}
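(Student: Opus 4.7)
The plan is to verify the three defining conditions of Galois-pinching for each of the four matrices $A^{\gamma_1}, A^{\gamma_2}$ (one pair for the triangle sequence, one pair for Cassaigne) by working entirely with the characteristic polynomials already computed above. Since each of these polynomials is a monic cubic in $\Z[x]$ with constant term $\pm 1$ and the discriminants have been displayed, the verification reduces to three short checks.

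First, I would establish \emph{irreducibility over $\mathbb Q$}. Since every listed polynomial is a monic cubic with constant term $\pm 1$, the rational root theorem limits the candidate rational roots to $\pm 1$. A direct substitution in each of $P^{\gamma_1}, P^{\gamma_2}$ (both for the triangle sequence and for Cassaigne) shows that neither $1$ nor $-1$ is a root; for a cubic, the absence of a rational root implies irreducibility over $\mathbb Q$.

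Second, I would establish that \emph{all roots are real}. For a cubic in $\mathbb R[x]$, the sign of the discriminant detects the number of real roots: an irreducible cubic has three real roots if and only if its discriminant is strictly positive. The four discriminants given above, namely $697$, $257$, $229$, and $3^3\cdot 31 = 837$, are all positive, so in each case all three roots are real.

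Third, I would determine the \emph{Galois group}. For an irreducible cubic over $\mathbb Q$, the Galois group of the splitting field is $A_3$ if the discriminant is a square in $\mathbb Q$ and is $S_3$ otherwise. It therefore suffices to observe that none of $697=17\cdot 41$, $257$ (prime), $229$ (prime), and $837=3^3\cdot 31$ is a perfect square, which is immediate from the prime factorisations already given. Combined with the preceding two steps, this yields Galois-pinching for each of the four matrices, proving the proposition.

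There is no real obstacle here: the proof is a sequence of elementary number-theoretic verifications whose substance is packaged into the choice of the explicit loops $\gamma_1$ and $\gamma_2$ made above. The only thing to be careful about is that the Galois-group criterion we use requires irreducibility as a hypothesis, so the three steps must be carried out in the stated order.
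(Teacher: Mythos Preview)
Your proof is correct and follows essentially the same approach as the paper: irreducibility via the rational root theorem (leading and constant coefficients $\pm 1$), reality of the roots from positivity of the discriminant, and Galois group $S_3$ from the discriminant not being a rational square. The only cosmetic difference is that you spell out the substitution of $\pm 1$ and the logical dependence of the Galois-group step on irreducibility, which the paper leaves implicit.
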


\begin{proof}
Observe that their characteristic polynomials,
\begin{itemize}
	\item have their first and the last coefficients are 1 and -1, which implies according to the rational root theorem that they do not have a rational root, and thus are irreducible over $\mathbb Q$;
	\item have positive discriminants, thus all of their roots are real;
	\item moreover the discriminants are not a square of a rational number, this implies (see \cite{Conrad}) that their Galois groups are isomorphic to $S_3$.
\end{itemize}
\end{proof}

\begin{proposition}\label{twisting}
	The matrices $A^{\gamma_1}$ and $A^{\gamma_2}$ do not share a common proper invariant subspace.
\end{proposition}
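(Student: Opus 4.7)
My plan is to leverage the Galois-pinching of $A^{\gamma_1}$ and $A^{\gamma_2}$ together with the coprimality of the discriminants of their characteristic polynomials (visible directly from the numerical values above). Since each matrix has three distinct real eigenvalues, it admits exactly six proper invariant subspaces: three eigenlines and three $2$-planes, each of the latter being the kernel of an eigenvector of the transpose. A common invariant subspace of $A^{\gamma_1}$ and $A^{\gamma_2}$ therefore amounts either to a common eigenvector, or to a common eigenvector of the two transposes. Since transposition preserves the characteristic polynomial, both cases are handled by the same argument, and I will describe only the first.

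Let $L_i$ be the splitting field of $P^{\gamma_i}$ over $\mathbb Q$. Being a Galois $S_3$-extension, each $L_i$ contains a unique quadratic subfield, namely $\mathbb Q\bigl(\sqrt{\Delta(P^{\gamma_i})}\bigr)$. Coprimality of the two discriminants implies that their squarefree parts are coprime, and the non-square condition built into Galois-pinching forces these squarefree parts to be strictly greater than $1$, so the two quadratic subfields are distinct. Moreover $L_1 \cap L_2$ is itself Galois over $\mathbb Q$, as an intersection of Galois extensions, and hence corresponds under the Galois correspondence in $\mathrm{Gal}(L_i/\mathbb Q) \cong S_3$ to a normal subgroup $\{e\}$, $A_3$, or $S_3$. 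This forces $L_1 \cap L_2 \in \{L_1, \mathbb Q(\sqrt{\Delta_1}), \mathbb Q\}$; the first two options are incompatible with the distinctness of the quadratic subfields, so we conclude $L_1 \cap L_2 = \mathbb Q$.

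To finish, suppose $v$ were a common eigenvector of $A^{\gamma_1}$ and $A^{\gamma_2}$, with respective eigenvalues $\alpha$ and $\beta$. Normalising the first non-zero coordinate of $v$ to $1$ and solving $(A^{\gamma_i} - \alpha I)v = 0$ by Gaussian elimination expresses the remaining coordinates of $v$ as elements of $\mathbb Q(\alpha) \cap \mathbb Q(\beta) \subseteq L_1 \cap L_2 = \mathbb Q$. Hence $v \in \mathbb Q^3$, and then $A^{\gamma_1} v = \alpha v$ forces $\alpha \in \mathbb Q$, contradicting the irreducibility of $P^{\gamma_1}$ recorded in Proposition~\ref{pinching}. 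The only real subtlety, I expect, is the clean reduction of the $2$-dimensional case to a $1$-dimensional one via the transpose; everything else is standard Galois theory combined with the numerical coprimality of discriminants noted just above.
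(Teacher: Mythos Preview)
Your proof is correct and takes essentially the same route as the paper, which simply records that the two discriminants are coprime and defers to the proof of Theorem~5.4 in Matheus--M\"oller--Yoccoz for the Galois-theoretic deduction. You have unpacked that citation into a self-contained argument (distinct quadratic subfields force $L_1\cap L_2=\mathbb{Q}$, whence any common eigenvector---of the matrices or of their transposes---would be rational), which is precisely the mechanism the reference supplies.
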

\begin{proof}
	This comes from the fact that $\Delta(P^{\gamma_1}) \wedge \Delta(P^{\gamma_2}) = 1$ as in the proof of Theorem 5.4 in \cite{MatheusMollerYoccoz15}.
\end{proof}

Proposition \ref{pinching} and Proposition \ref{twisting} imply that the cocycle is pinching and twisting in the sense of \cite{AvilaViana07}.
Now Theorem \ref{main} follows using bounded distortion property for the measure and Theorem 7.1 in \cite{AvilaViana07}.

\begin{remark*}
Formally, Theorem 2.17 in \cite{MatheusMollerYoccoz15} is stated for the symplectic cocycles while we work with a case of a group $SL(3,\mathbb{Z})$. However, it is easy to check that the proof of the same statement in our case follows the same strategy but is significantly more simple. Indeed, the proof of the pinching property of Galois-pinching matrix repeats verbatim the proof of Lemma 4.2 in \cite{MatheusMollerYoccoz15}, and the main goal is to prove an analogue of Theorem 4.6. Lemma 4.8 again works in case of $SL(n, \mathbb{Z})$, so $k$-twisting is equivalent to the property that graphs $\Gamma_k(B)$ are complete. But this last statement is much easier to prove in our case:  since $B$ is invertible, each $\Gamma_k(B)$ contains an arrow; moreover, by definition $\Gamma_k(B)$ is invariant under the action of the Galois group; but in case of $SL(3,\mathbb Z)$ Galois group acts by all possible permutation, hence from a given arrow we can  get all the arrows and completeness of the graph follows.
\end{remark*}

\bibliographystyle{plain}
\bibliography{biblio}

\end{document}